\font\tenscr=rsfs10 % scaled \magstep1
\font\sevenscr=rsfs7 % scaled \magstep1
\font\fivescr=rsfs5 % scaled \magstep1
\def\scr{\fam\scrfam}
\newtheorem{theorem}{Theorem}[section]
\newtheorem{lemma}[theorem]{Lemma}
\newtheorem{corollary}[theorem]{Corollary}
\newtheorem{proposition}[theorem]{Proposition}
\theoremstyle{definition}
\newtheorem{definition}[theorem]{Definition}
\newcommand{\C}{\mathbb{C}}
\newcommand{\D}{\mathbb{D}}
\newcommand{\Z}{\mathbb{Z}}
\newcommand{\row}[2]{#1_1,\ldots,#1_#2}
\def\Z{\mathbb Z}
\def\C{\mathbb C\/}
\def\sC{{\scr C}}
\def\sF{{\scr F}}
\def\sG{{\scr G}}
\def \ma {\mathfrak{M}_A}
\def\tA{\tilde A}
\def\tX{\tilde X}
\def\tf{\tilde f}
\def\tpi{\tilde \pi}
\def\row#1#2{#1_1,\ldots,#1_#2}
\newcommand{\what}{\widehat}
\newcommand{\ti}{\tilde}
\def\tX{\tilde X}
\def\wX{\widehat X}
\def\wY{\widehat Y}
\def\cma{{\frak M}_A}
\newcommand{\bthm}{\begin{theorem}}
\newcommand{\ethm}{\end{theorem}}
\newcommand{\blem}{\begin{lemma}}
\newcommand{\elem}{\end{lemma}}
\newcommand{\bcor}{\begin{corollary}}
\newcommand{\ecor}{\end{corollary}}
\newcommand{\bprop}{\begin{proposition}}
\newcommand{\eprop}{\end{proposition}}
\newcommand{\bdefn}{\begin{definition}}
\newcommand{\edefn}{\end{definition}}
\newcommand{\bpf}{\begin{proof}}
\newcommand{\epf}{\end{proof}}
\newcommand{\bsoln}{\begin{soln}}
\newcommand{\esoln}{\end{soln}}
\newcommand{\bexc}{\begin{exc}}
\newcommand{\eexc}{\end{exc}}
\newcommand{\bex}{\begin{ex}}
\newcommand{\eex}{\end{ex}}
\newcommand{\brem}{\begin{rem}}
\newcommand{\erem}{\end{rem}}
\newcommand{\bprob}{\begin{prob}}
\newcommand{\eprob}{\end{prob}}
\newcommand{\bclaim}{\begin{claim}}
\newcommand{\eclaim}{\end{claim}}
\newcommand{\bm}{\bibitem}
\newcommand{\mc}{\mathcal}
\newcommand{\mpt}{\mathpunct}
\newcommand{\ra}{\rightarrow}
\newcommand{\ol}{\overline}
\newcommand{\bi}{\begin{itemize}}
\newcommand{\ei}{\end{itemize}}
\newcommand{\bc}{\begin{cases}}
\newcommand{\ec}{\end{cases}}
\newcommand{\ba}{\begin{array}}
\newcommand{\ea}{\end{array}}
\newcommand{\bea}{\begin{eqnarray}}
\newcommand{\eea}{\end{eqnarray}}
\newcommand{\beaa}{\begin{eqnarray*}}
\newcommand{\eeaa}{\end{eqnarray*}}
\newcommand{\beastar}{\begin{eqnarray*}}
\newcommand{\eeastar}{\end{eqnarray*}}
\newcommand{\sste}{\subseteq}
\newcommand{\ptl}{\partial}
\font\tenscr=rsfs10 % scaled \magstep1
\font\sevenscr=rsfs7 % scaled \magstep1
\font\fivescr=rsfs5 % scaled \magstep1
\def\scr{\fam\scrfam}
\def\scr{\fam\scrfam}
\begin{document}

\title[A Hull with No Nontrivial Gleason Parts]{A Hull with No Nontrivial Gleason Parts}
\author{Brian J. Cole}
\address{Department of Mathematics, Brown University, Providence, RI 02912}
\email{bjc@math.brown.edu}
\author{Swarup N. Ghosh}
\address{Department of Mathematics, Southwestern Oklahoma State University, Weatherford, OK 73096}
\email{swarup.ghosh@swosu.edu}
\author{Alexander J. Izzo}
\address{Department of Mathematics and Statistics, Bowling Green State University, Bowling Green, OH 43403}
\email{aizzo@math.bgsu.edu}

\subjclass[2000]{Primary 46J10; Secondary 32E20}
\keywords{polynomial convexity, polynomially convex hulls, Gleason parts, point derivations, dense invertibles, peak point conjecture}

\begin{abstract}
\vskip 24pt
The existence of a nontrivial polynomially convex hull with every point a one-point Gleason part and with no nonzero bounded point derivations is established.  This strengthens the \hbox{celebrated} result of Stolzenberg that there exists a nontrivial polynomially convex hull that contains no analytic discs.  A doubly generated counterexample to the peak point conjecture is also presented.
%and also gives an affirmative answer to a question from the Proceedings of the Symposium on Function Algebras held at Tulane University in 1965. 
\end{abstract}
\maketitle

%\vskip -1.86 true in
%\centerline{\footnotesize\it Dedicated to} 
%\vskip 1.86 truein

%========================================================================
\section{Introduction}

It was once conjectured that whenever the polynomially convex hull $\wX$ of a compact set $X$ in $\C^n$ is strictly larger than $X$, the complementary set $\wX\setminus X$ must contain an analytic disc.  This conjecture was disproved by Gabriel Stolzenberg \cite{Stol}.  Given Stolzenberg's result, it is natural to ask whether weaker semblances of analyticity, such as nontivial Gleason parts or nonzero bounded point derivations, must be present in $\wX\setminus X$.  We will establish the following result giving a negative answer to this question.

\begin{theorem}\label{maintheorem}
There exists a compact set $X$ in $\C^3$ such that $\wX\setminus X$ is nonempty but every point of $\wX$ is a one-point Gleason part for $P(X)$ and there are no nonzero bounded point derivations on $P(X)$.
\end{theorem}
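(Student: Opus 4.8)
The strategy is to combine Cole's device of adjoining square roots --- which forces Gleason parts to degenerate and destroys bounded point derivations --- with a base algebra whose maximal ideal space strictly exceeds its Shilov boundary, and to arrange the entire construction inside $\C^3$ so that the resulting algebra is $P(X)$.

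\emph{Step 1: square roots degenerate parts and kill derivations.} I would first isolate the following mechanism. Let $B$ be a uniform algebra such that for every $a\in\mathfrak M_B$ the elements $g$ of the maximal ideal $M_a$ that possess a square root in $B$ are dense in $M_a$. Then $B$ has no nonzero bounded point derivation and every point of $\mathfrak M_B$ is a one-point Gleason part. For the first claim, the hypothesis gives $M_a=\overline{M_a^{\,2}}$ --- approximate $f\in M_a$ by $g=h^2$ with $h\in B$, where replacing $g$ by $g-g(a)$ lets one assume $g(a)=0$, hence $h\in M_a$ --- and a bounded point derivation at $a$ annihilates both $M_a^2$ and the constants, so it vanishes. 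For the second, for $a\ne b$ put $\rho(a,b)=\sup\{|g(b)|:g\in M_a,\ \|g\|\le1\}$; if such a $g$ has a square root $h\in B$ then automatically $h\in M_a$ and $\|h\|=\|g\|^{1/2}\le1$, so $\rho(a,b)\ge|h(b)|=|g(b)|^{1/2}$. By density this yields $\rho(a,b)\ge\rho(a,b)^{1/2}$, whence $\rho(a,b)\in\{0,1\}$; since $B$ separates points $\rho(a,b)\ne0$, so $\rho(a,b)=1$, which by Gleason's characterization of parts means $b$ lies in a different part from $a$.

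\emph{Step 2: a root-extension tower over the disc algebra.} Starting from $A_0=P(\oD)$, whose maximal ideal space $\oD$ properly contains its Shilov boundary $\bT$, build a tower $A_0\subset A_1\subset A_2\subset\cdots$ in which $A_j$ arises from $A_{j-1}$ by a single Cole square-root extension, adjoining a square root of one element $g_j\in A_{j-1}$; choose the $g_j$ so that, interleaved across all stages, they run through a dense subset of $\bigcup_j A_j$. Thus $\mathfrak M_{A_j}=\{(x,w):x\in\mathfrak M_{A_{j-1}},\ w^2=g_j(x)\}$ is a branched double cover of $\mathfrak M_{A_{j-1}}$. The closed union $A_\infty$, realized on the inverse limit $X_\infty$ of the $\mathfrak M_{A_j}$, then satisfies the hypothesis of Step 1: any $f\in A_\infty$ lies close to some $f_j\in A_j$, and $f_j-f_j(a)$ acquires a square root vanishing at $a$ in $A_{j+1}$. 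Hence $A_\infty$ has no nonzero bounded point derivation and every point of $\mathfrak M_{A_\infty}=X_\infty$ is a one-point part; in particular $X_\infty$ contains no analytic disc, which already strengthens Stolzenberg's conclusion. Since each branched cover sits over its predecessor and $A_0$ already has $\mathfrak M_{A_0}\ne\partial_{A_0}$, the Shilov boundary $\partial_{A_\infty}$ is a proper closed subset of $X_\infty$.

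\emph{Step 3: keeping everything in $\C^3$, and conclusion.} This is the heart of the matter and where I expect the real difficulty: since $A_\infty$ is not finitely generated it cannot equal $P(K)$ for any compact $K$ in any $\C^n$, so the tower of Step 2 must be carried out with each $A_j=P(K_j)$ for a compact $K_j\subset\C^3$. The branched double cover of $K_{j-1}\subset\C^3$ is a two-dimensional compactum in $\C^4$, which one then pushes back into $\C^3$ by a linear projection chosen generically enough to be injective on it, to preserve polynomial convexity, and to keep the discarded coordinate a uniform limit of polynomials in the retained ones --- so that the bonding map $K_j\to K_{j-1}$ is a limit of polynomial maps and the algebras genuinely nest. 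Controlling these projections, and the moduli of the approximating polynomials, uniformly along the tower should yield a compact $K\subset\C^3$ with $P(K)\cong A_\infty$ and $\mathfrak M_{P(K)}=\widehat K\cong X_\infty$. Finally set $X:=\partial_{P(K)}\subset\C^3$. A closed boundary determines the polynomial hull, so $\widehat X=\widehat K$ and $P(X)=P(K)$, while $X=\partial_{A_\infty}\subsetneq X_\infty=\widehat X$; thus $\widehat X\setminus X$ is nonempty, yet by Steps 1--2 every point of $\widehat X$ is a one-point Gleason part for $P(X)$ and $P(X)$ carries no nonzero bounded point derivation. The decisive obstacle is Step 3: squeezing infinitely many square-root extensions into a fixed three complex dimensions while keeping the polynomial hulls and the polynomial approximation under control in the passage to the limit.
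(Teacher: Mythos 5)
Your overall architecture---a Cole square-root tower over a base algebra whose maximal ideal space strictly contains its Shilov boundary, followed by a realization of the limit algebra as $P(X)$ for $X\subset\C^3$---is the same as the paper's, and your Steps 1 and 2 are essentially Cole's 1968 construction, which the paper cites rather than reproves. But the two points you leave vague are exactly where the paper's content lies, and as written both are genuine gaps.

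First, the base algebra cannot be $P(\overline{\D})$. By Rouch\'e's theorem no $g$ with $\|g-z\|_{\overline{\D}}<1$ is zero-free on $\overline{\D}$, so the invertible elements are not dense in the disc algebra; yet the only available mechanism for keeping the tower finitely generated requires the adjoined roots to be roots of \emph{invertible} functions (and your Step 2, which adjoins roots of arbitrary dense elements, is incompatible with it). One therefore needs a base algebra with dense invertibles and $\mathfrak{M}\neq{}$Shilov boundary; the paper uses the Dales--Feinstein set $Y\subset\partial\overline{\D}^2$ with $(0,0)\in\widehat Y$ and $P(Y)$ having dense invertibles, which is why the theorem lands in $\C^3$ rather than $\C^2$. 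Second, and decisively, your Step 3 fails and you offer no substitute: a generic linear projection $\C^4\to\C^3$ neither preserves polynomial convexity nor makes the discarded coordinate a uniform limit of polynomials in the retained ones---the latter is precisely the approximation statement you would have to prove, not something genericity supplies. The paper's solution is a finite-generation lemma (Lemma~2.4): if $A$ is generated by $f_1,f_2,\ldots$ where each $f_n$ with $n>N$ is either the inverse of, or an $m$-th root of an invertible element of, $[f_1,\ldots,f_{n-1}]$, then $A$ is generated by $f_1,\ldots,f_N$ together with the \emph{single} function $\sum_{n>N}c_nf_n$ for suitably rapidly decreasing $c_n>0$; recovering each $f_k$ from this sum is a delicate induction using Proposition~2.5 and an argument-tracking lemma to identify the correct branch of the root. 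Applied with $N=2$, $f_1=z_1\circ\tilde\pi$, $f_2=z_2\circ\tilde\pi$, this yields three generators outright, with no projections at all. That lemma is the missing idea in your proposal.
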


It is particularly surprising that the question of whether nontrivial polynomially convex hulls must contain nontrivial Gleason parts has not been considered in the literature earlier because early efforts to prove the existence of analytic discs focused on the use of Gleason parts.  John Wermer \cite{Wermer} proved that every nontrivial Gleason part for a Dirichlet algebra is an analytic disc, and this was extended to uniform algebras with uniqueness of representing measures by Gunter Lumer \cite{Lumer}.  However, once it was proven that analytic discs  do not always exist, it seems that almost no further work was done on Gleason parts in hulls except for a result of Richard Basner \cite{Basener} that there exists a compact set in $\C^2$ whose rationally convex hull contains no analytic discs but contains a Gleason part of positive 
4-dimensional measure.  Additional examples of hulls that contain no analytic discs but do contain weaker semblances of analyticity will be given in the paper \cite{Izzofuture} of the third author.

Our proof of Theorem~\ref{maintheorem} uses a construction from the first author's dissertation \cite{Cole} related to the so called peak point conjecture.  This conjecture asserted that if $A$ is a uniform algebra on its maximal ideal space $X$, and if each point of $X$ is a peak point for $A$, then $A=C(X)$, the algebra of all continuous complex-valued functions on $X$.  In his dissertation \cite{Cole}, the first author gave a general construction for, roughly speaking, successively adjoining square roots of functions in a uniform algebra, and he used this construction to give a counterexample to the peak point conjecture.  The construction also yields the existence of uniform algebras $A$ such that the maximal ideal space $\cma$ of $A$ is strictly larger than the space on which $A$ is defined and yet $A$ has no nontrivial Gleason parts and no nonzero point derivations.
Since the algebras produced by this general construction are not in general finitely generated though, this does not address the question of the existence of analytic discs and point derivations in polynomially convex hulls settled by Theorem~\ref{maintheorem} above.  

Following Garth Dales and Joel Feinstein \cite{H.Dales_J.Feinstein_2008}, we will say that a uniform algebra $A$ has dense invertibles if the invertible elements of $A$ are dense in $A$.
Carrying out the general construction in the first author's dissertation \cite{Cole} starting with a suitable doubly generated uniform algebra with dense invertibles and successively adjoining square roots to a countable dense collection of invertible elements leads to a doubly generated counterexample to the peak point conjecture.

\begin{theorem}\label{double}
There exists a compact polynomially convex set $X$ in $\C^2$ such that $P(X)\neq C(X)$ but every point of $X$ is a peak point for~$P(X)$.
\end{theorem}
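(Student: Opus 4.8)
The plan is to realize Theorem~\ref{double} as the "finitely generated shadow" of the general square-root-adjunction construction from \cite{Cole}, applied with care so that finite generation is never lost along the way. Concretely, one starts with a doubly generated uniform algebra $A_0 = P(X_0)$ on a polynomially convex set $X_0 \subset \C^2$ which has dense invertibles and which is \emph{not} all of $C(X_0)$; the natural candidate is an algebra built on an arc or a thin Swiss-cheese-type set in $\C^2$, since such examples are known to be doubly generated, polynomially convex, proper in $C(X_0)$, and to have dense invertibles. Then I would iterate: having produced $A_k = P(X_k)$ with $X_k \subset \C^{2}$ polynomially convex (one keeps the embedding dimension fixed at $2$, which is the whole point of the "doubly generated" strengthening), choose the $k$th function $g_k$ from a fixed countable dense subset of the invertibles of $A_k$, and adjoin a square root of $g_k$. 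The key technical input is that adjoining a square root of an \emph{invertible} element can be done while (a) staying finitely generated with the \emph{same} number of generators — because $\sqrt{g_k}$ together with $1/\sqrt{g_k} = \sqrt{g_k}/g_k$ can be expressed in terms of the old generators and the single new generator, and in fact Cole's construction realizes this as a branched cover that, for invertible $g_k$, is an \emph{unbranched} double cover — and (b) preserving polynomial convexity and the doubly-generated property after a suitable polynomial change of coordinates. This last point is where one must be a little careful: the abstract adjunction produces an algebra on a fibered space, and one must check it is (isomorphic to) $P(X_{k+1})$ for a polynomially convex $X_{k+1} \subset \C^2$; this is exactly the kind of bookkeeping carried out in \cite{Cole} and I would invoke it.

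Next I would take the inverse limit. The spaces $X_k$ form an inverse system with surjective bonding maps, and $X := \varprojlim X_k$ is compact; the direct limit $A := \varinjlim A_k = \overline{\bigcup_k A_k}$ is a uniform algebra on $X$. Because each $A_k$ is doubly generated with the \emph{same} two generators (pulled back along the bonding maps), $A$ is doubly generated, and its maximal ideal space is the polynomially convex hull of the image of $X$ in $\C^2$ under those two generators; one argues $X$ itself is polynomially convex, so $A = P(X)$ for a polynomially convex $X \subset \C^2$. That $P(X) \neq C(X)$ follows because $P(X)$ carries a nonzero bounded linear functional annihilating it — e.g.\ a measure orthogonal to $A_0$ pulls back to one orthogonal to $A$, using that the bonding maps are nice enough (measure-preserving on the relevant pieces, or at least that orthogonality is inherited, which is the standard inverse-limit argument in this circle of ideas).

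The heart of the matter is the peak point claim: every point of $X$ is a peak point for $P(X)$. Here the mechanism is the classical one from \cite{Cole}: once a square root of every element of a dense set of invertibles has been adjoined, for any $\varphi \in X$ and any $\varepsilon > 0$ one finds $f \in \bigcup_k A_k$ with $\|f\| \le 1$, $f(\varphi)$ close to $1$, and $|f|$ bounded away from $1$ off a small neighborhood of $\varphi$; iterating the square-root trick (replacing $f$ by $\sqrt{\,(1+f)/2\,}$-type expressions, which are available because $(1+f)/2$ is invertible and lies near an element whose square root was adjoined) sharpens the peak, and a normal-families / telescoping argument produces an actual peaking function in the limit algebra. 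In fact the cleanest route is to quote the conclusion already recorded in the Introduction — that Cole's construction yields uniform algebras with $\cma \supsetneq$ (base space) having every point a peak point — and simply observe that the doubly-generated refinement above does not disturb that conclusion, since the peak-point argument only uses the square roots, not finite generation. The main obstacle, and the place where real work is needed rather than citation, is step~(b) above: verifying that the square-root adjunction can be performed so that the result is again $P(X_{k+1})$ for a polynomially convex set in $\C^{2}$ — i.e.\ that finite generation with exactly two generators, polynomial convexity, \emph{and} dense invertibles all survive each step simultaneously. Keeping dense invertibles is needed so the construction can continue, and it is not automatic; one likely arranges it by choosing $X_0$ and the coordinate changes so that each $X_k$ remains, say, of the form where dense invertibility is known (e.g.\ has connected, simply-connected-enough fibers, or empty interior), and by checking that an unbranched double cover of such a set retains the property.
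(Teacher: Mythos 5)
Your overall strategy (Cole's root--adjunction applied to a Swiss cheese, with attention to keeping the algebra finitely generated) is the right one, and indeed the paper notes that an iterative version of this plan works. But there is a genuine gap at the exact point you flag as ``the heart of the matter'' for finite generation, and your proposed resolution of it is wrong. You assert that adjoining $\sqrt{g_k}$ keeps ``the \emph{same} number of generators --- because $\sqrt{g_k}$ together with $1/\sqrt{g_k}$ can be expressed in terms of the old generators and the single new generator.'' That only shows the new algebra is generated by \emph{three} functions (two old plus one new); iterating over a countable dense family of invertibles leaves you with countably many generators and no way to map the limit space into $\C^2$. Appealing to \cite{Cole} for this bookkeeping does not help: as the Introduction of the paper states explicitly, the algebras produced by Cole's general construction are \emph{not} in general finitely generated, and making them so is precisely the new content here. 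The missing ingredient is Lemma~\ref{generators}: if a uniform algebra is generated by a sequence $f_1,f_2,\ldots$ in which every $f_n$ with $n>N$ is either the inverse of an element of $[f_1,\ldots,f_{n-1}]$ or an $m$-th root of an invertible element of $[f_1,\ldots,f_{n-1}]$, then the whole algebra is generated by $N+1$ functions --- one takes the single extra generator to be $\sum_{n>N}c_nf_n$ for rapidly decreasing $c_n$ and recovers each $f_n$ by a perturbation argument. Without this (or an equivalent device), your construction never lands in $\C^2$.

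Two further points. First, your starting set must be chosen so that the only Jensen measures for $R(X_0)$ are point masses (McKissick's Swiss cheese); an arc will not do (for a polynomially convex arc one typically has $P(X_0)=C(X_0)$), and mere properness plus dense invertibles is not enough for the peak-point conclusion. Second, your peak-point mechanism (iterating $\sqrt{(1+f)/2}$-type expressions) is not the argument that actually works; what is used is that the presence of $m$-th roots of a dense family for arbitrarily large $m$ (in the paper, all $m\ge 2$ adjoined in one step; in the iterative version, $2^k$-th roots) forces the push-forward of any representing measure to be a Jensen measure for $R(X_0)$, hence a point mass, after which the adjoined roots separate fiber points and force the representing measure to be $\delta_{\tilde x}$; uniqueness of representing measures on a metrizable space then gives peak points. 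The paper also differs from your route in a way worth noting: rather than iterating and taking an inverse limit, it adjoins all $m$-th roots of all members of a countable dense family of invertible rational functions $p_n/q_n$ in a single step, verifies that the maximal ideal space of the resulting algebra $\tilde A$ is the fibered space itself (which is what gives polynomial convexity of the image for free), proves $\tilde A\neq C(\tilde X)$ by averaging over the fibers with Haar measure on a product of roots-of-unity groups, and then applies Lemma~\ref{generators} to the array $z\circ\tilde\pi,\ (1/q_n)\circ\tilde\pi,\ (1/p_n)\circ\tilde\pi,\ g_{m,n}$ to get exactly two generators.
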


To apply a similar argument to prove Theorem~\ref{maintheorem} we need a doubly generated uniform algebra with dense invertibles having the additional property that the algebra is defined on a proper subset of its maximal ideal space.  Such an algebra  was constructed by Dales and Feinstein~\cite{H.Dales_J.Feinstein_2008}.

\bthm[\cite{H.Dales_J.Feinstein_2008}, Theorem~2.1]
\label{Dales-Feinstein Algebra}
There exists a compact set $Y \sste \ptl \ol{\D}^2$ such that $(0, 0) \in \widehat Y$, and yet $P(Y )$ has dense invertibles.  (Here $\D$ denotes the open unit disc in the plane.)
\ethm

Because the property of having dense invertibles is preserved under the process of adjoining square roots \cite[Theorem~2.1 and Proposition~2.5]{T.Dawson_J.Feinstein_2003}, we get as a corollary of the proofs of Theorems~\ref{maintheorem} and~\ref{double} the following additional assertion.

\bthm \label{dense}
In each of Theorems~\ref{maintheorem} and~\ref{double}, the set $X$ can be chosen so that, in addition, $P(X)$ has dense invertibles.
\ethm

In connection with the proof of their result quoted above,
%as Theorem~ref{Dales-Feinstein Algebra}
Dales and Feinstein asked what can be said about the existence of nonzero point derivations \cite[Section~4, Question~1]{H.Dales_J.Feinstein_2008}.  
Although they asked specifically about the algebra $P(Y)$ for the set $Y$ that they constructed, the spirit of their question is what can be said about point derivations on uniform algebras of the form $P(X)$ ($X\subset\C^n$ compact) at points of the set $\wX\setminus X$, when $X$ is such that $\wX\setminus X$ is nonempty while $P(X)$ has dense invertibles.  Taken together, Theorem~\ref{dense} above and results in the third author's paper \cite{Izzofuture} answer this question in the case of bounded point derivations: there exist such uniform algebras both with, and without, bounded point derivations.  Theorem~\ref{dense} and results in \cite{Izzofuture} also show that there exist such uniform algebras both with, and without, nontrivial Gleason parts.
If one considers uniform algebras on nonmetrizable spaces, then one can obtain a uniform algebra with no nonzero, possibly unbounded, point derivations and having dense invertibles.  This can be shown by starting with $P(Y)$, where $Y$ is the set of Dales and Feinstein in Theorem~\ref{Dales-Feinstein Algebra}, and applying the construction of Cole given in \cite{Cole} that produces uniform algebras in which every element has a square root.  The proof is essentially the same as that of \cite[Theorem~2.3]{H.Dales_J.Feinstein_2008}.  It seems to be a difficult open question whether there exist nontrivial uniform algebras (with or without dense invertibles) on a \emph{metrizable} space having no nonzero, possibly unbounded, point derivations.

In the next section we recall some definitions and notations already used above, and we prove a key lemma.  Theorem~\ref{maintheorem} is proved in Section~3, and Theorem~\ref{double} is proved in Section~4.
The arguments given in those sections also contain the proof of Theorem~\ref{dense}.

%%%%%%%%%%%%%%%%%%%%%%%%%%%%%%%%%%%%%%%%%
%                                                    PRELIMINARIES

\section{Preliminaries}~\label{prelim}
%%%%%%%%%%%%%%%%%%%%%%%%%%%%%%%%%%%%%%%%%%%%%%%%%%%%%%%%%%%%%%%%%%%%%%%%%%%%%%%%%%

For
$X$ a compact (Hausdorff) space, we denote by $C(X)$ the algebra of all continuous complex-valued functions on $X$ with the supremum norm
$ \|f\|_{X} = \sup\{ |f(x)| : x \in X \}$.  A \emph{uniform algebra} on $X$ is a closed subalgebra of $C(X)$ that contains the constant functions and separates
the points of $X$.  

For a compact set $X$ in $\C^n$, we denote by 
$P(X)$ the uniform closure on $X$ of the polynomials in the complex coordinate functions $z_1,\ldots, z_n$, and we denote by $R(X)$ the uniform closure of the rational functions with no poles on $X$.  It is well known that the maximal ideal space of $P(X)$ can be naturally identified with the \emph{polynomially convex hull} $\what X$ of $X$ defined by
$$\what X=\{z\in\C^n:|p(z)|\leq \max_{x\in X}|p(x)|\
\mbox{\rm{for\, all\, polynomials}}\, p
\}.$$

Let $A$ be a uniform algebra on a compact space $X$.
A point $x\in X$ is said to be a \emph{peak point} for $A$ if
there exists $f \in A$ with $f(x) = 1$ and $|f(y)| < 1$ for all $y \in X \setminus \{x\}$.
The \emph{Gleason parts} for the uniform algebra $A$ are the equivalence classes in the maximal ideal space of $A$ under the equivalence relation $\varphi\sim\psi$ if $\|\varphi-\psi\|<2$ in the norm on the dual space $A^*$.  (That this really is an equivalence relation is well known but {\it not\/} obvious!)
For $\phi$ a multiplicative linear functional on $A$, a \emph{point derivation} on $A$ at $\phi$ is a linear functional $\psi$ on $A$ satisfying the identity 
$$\phantom{\hbox{for all\ } f,g\in A.} \psi(fg)=\psi(f)\phi(g) + \phi(f)\psi(g)\qquad \hbox{for all\ } f,g\in A.$$
A point derivation is said to be \emph{bounded} if it is bounded (continuous) as a linear functional.
It is well known that every peak point is a one-point Gleason part, and that at a peak point there are no nonzero point derivations.  (See for instance \cite{Browder}.)

An \emph{analytic disc} in the maximal ideal space $\ma$ of a uniform algebra $A$ is, by definition, an injective map $\sigma$ of the open unit disc $\D\subset \C$  into 
$\ma$ such that the function $f\circ \sigma$ is analytic on $\D$ for every $f$ in 
$A$.  It is immediate that the presence of an analytic disc implies the existence of a nontrivial Gleason part and nonzero bounded point derivations.

Given continuous functions $\row fk$ on a compact space $X$, we denote by $[\row fk]$ the uniform algebra that they generate, i.e., the smallest closed, unital subalgebra of $C(X)$ that contains $f_1,\ldots, f_k$.  Equivalently, $[\row fk]$ is the uniform closure on $X$ of the set of polynomials in $f_1,\ldots, f_k$.  We say that the uniform algebra $A$ is \emph{doubly generated} if $A$ has a set of generators consisting of two elements.

The following key lemma is a generalization of the theorem, due to Kenneth Hoffman and Errett Bishop in case $n=1$, and Hugo Rossi~\cite{Rossi} in general, that for $X$ a compact set in $\C^n$, the uniform algebra $R(X)$ is generated by $n+1$ functions.  

\begin{lemma}\label{generators}
Let $A$ be a uniform algebra on a compact space $X$.  Suppose that $A$ is generated by a sequence of functions ${\mc F}=\{f_1, f_2, \ldots\}$.  Suppose also that there is a positive integer $N$ such that for each $n>N$, the function $f_n$ is either the inverse of a function in the uniform algebra $[\row f{{n-1}}]$ or is an $m$-th root {\rm(}for some $m\geq 2${\rm)} of an invertible function in the uniform algebra $[\row f{{n-1}}]$.  Then $A$ is generated by $N+1$ functions.
\end{lemma}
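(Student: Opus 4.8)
The approach is to reduce the infinite generating set $\mc F$ to a finite one by the classical trick of packaging the tail $f_{N+1}, f_{N+2}, \ldots$ into a single auxiliary function, the way Rossi's theorem packages coordinate functions. First I would set $B = [\row fN]$, the subalgebra generated by the first $N$ functions, and observe that by hypothesis every $f_n$ with $n > N$ lies in the increasing union $\bigcup_{n} [\row fn]$, each of whose members is obtained from $B$ by a finite chain of operations each of which is (a) adjoining an inverse of an already-present invertible element or (b) adjoining an $m$-th root of an already-present invertible element. The key structural point is that all of these operations take place \emph{inside} $C(X)$ with functions that are already continuous on $X$; there is no extension of the space going on, so $A = [\row fN \cup \{g\}]$ for \emph{any} single continuous $g$ on $X$ provided $g$, together with $f_1, \ldots, f_N$, generates all the $f_n$.

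**Construction of the $(N+1)$st generator.** The natural choice is a rapidly convergent series $g = \sum_{n > N} \varepsilon_n f_n$ where $\varepsilon_n > 0$ is chosen small enough (relative to $\|f_n\|_X$ and the gaps between partial data) that one can recover each $f_n$ individually from $g$ by operations available in the algebra. The standard device: choose the $\varepsilon_n$ decreasing so fast that for each $k$, the function $g$ determines $\sum_{n=N+1}^{k}\varepsilon_n f_n$ and hence, inductively, $f_{k}$ itself, within $[\row fN \cup \{g\}]$. Concretely one wants $[\row fN, g]$ to contain each $f_n$; then that algebra contains $\mc F$, hence equals $A$, and since it is generated by the $N+1$ functions $f_1, \ldots, f_N, g$ we are done. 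The separation-of-points and containment-of-constants requirements for $[\row fN, g]$ to be a uniform algebra are automatic.

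**The main obstacle.** The delicate step is verifying that $f_n$ is genuinely recoverable from $g$ inside the algebra — i.e. that the ``peeling off one term at a time'' really can be performed using only the algebra operations (uniform limits of polynomials) together with the inverses and roots that hypothesis guarantees are present. Taking a single term off a convergent series is not an algebraic operation in general; the usual fix is to exploit that the $f_n$ for $n > N$ are not arbitrary but are inverses or roots of elements of earlier subalgebras, so one can arrange the $\varepsilon_n$ and the bookkeeping so that at each stage the quantity to be extracted is, say, the value of a function that is already known to be invertible (so that one may invert, multiply, extract roots as needed, then pass to a limit). I expect this is where the hypothesis ``inverse or $m$-th root of an \emph{invertible} function in $[\row f{n-1}]$'' is used in an essential way — it is exactly what keeps all the intermediate manipulations inside the algebra. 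Once that recovery is established, the induction closes and the lemma follows; the case $N = n-1$ being the base case and the general chain being handled by iterating.

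Alternatively, and perhaps more cleanly, I would phrase the recovery via a single generator built so that $[\row fN, g] \supseteq [\row fn]$ for every $n$ by a direct induction on $n$: assuming $[\row fN, g] \supseteq [\row f{n-1}]$, one shows $f_n \in [\row fN, g]$ because $f_n$ is obtained from $[\row f{n-1}] \subseteq [\row fN, g]$ by an inverse or a root, operations under which the algebra $[\row fN, g]$ — \emph{being a uniform algebra that already contains the relevant invertible element} — need not be closed in general, so one must instead ensure $f_n$ is accessible from $g$ itself; this is the one spot that forces the careful choice of the coefficients $\varepsilon_n$, and it is the crux of the argument.
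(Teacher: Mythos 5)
Your overall strategy --- package the tail into a single function $g=\sum_{n>N}\varepsilon_n f_n$ with rapidly decreasing coefficients and show by induction that $[f_1,\ldots,f_N,g]$ contains every $f_n$ --- is exactly the strategy of the paper. But the proposal stops at the point you yourself identify as ``the crux of the argument'': you never actually explain how $f_k$ is recovered, and that recovery is the entire mathematical content of the lemma. The missing mechanism is this. Writing $b_k=\sum_{n\ge k}c_nf_n$ (which lies in the algebra once $f_{N+1},\ldots,f_{k-1}$ do, since $b_k=g-\sum_{n=N+1}^{k-1}c_nf_n$), one does not try to ``peel off'' the term $c_kf_k$ from the series; instead one multiplies by the inverse $h_k$ of the relevant function from $[f_1,\ldots,f_{k-1}]$. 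If $f_k=h_k^{-1}$, then $b_kh_k=c_k+(\text{tail})\cdot h_k$ is within $2^{-k}c_k$ of the nonzero constant $c_k$, hence invertible in the generated algebra by the elementary Neumann-series fact, whence $h_k$ is invertible there and $f_k=h_k^{-1}$ is in the algebra. If instead $f_k^m=g_k$ with $h_k=g_k^{-1}$, one shows $b_k^mh_k$ is a small perturbation of the constant $c_k^m$, so its range lies in the open right half-plane; one then applies the branch of $z^{1/m}$ with argument in $(-\pi/2m,\pi/2m)$ (a uniform limit of polynomials on compact subsets of the half-plane) to get $s_k$ in the algebra with $s_k^m=b_k^mh_k$.

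The second, and subtler, missing point arises only in the root case: from $f_k^m=(b_ks_k^{-1})^m$ one gets $\alpha(x)f_k(x)=b_k(x)s_k^{-1}(x)$ for some $m$-th root of unity $\alpha(x)$ depending on $x$, and one must prove $\alpha\equiv 1$ to conclude $f_k=b_ks_k^{-1}$. This is where the coefficients must be chosen with care: the paper imposes the quantitative condition $\bigl\|\sum_{n>k}c_nf_n\bigr\|<(c_k/2)\sin(\pi/2m)\min_X|f_k|$, which forces $\bigl|\arg\alpha(x)\bigr|<\pi/m$ by an elementary estimate on arguments, and an $m$-th root of unity with argument of modulus less than $\pi/m$ must equal $1$. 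Without this step the proof genuinely fails --- $b_ks_k^{-1}$ could a priori equal $f_k$ times a locally constant root-of-unity-valued function, and nothing in your outline rules that out. So the proposal has the right architecture but omits both of the ideas that make it work: the ``multiply by the inverse to land near a positive constant'' device, and the argument estimate that resolves the $m$-th root ambiguity.
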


In the proof we will make repeated use of the following very well known elementary fact \cite[Theorem~1.2.1]{Browder}.

\begin{proposition}\label{elem}
Let $A$ be a Banach algebra with identity element $e$.  If $f\in A$, $\lambda\in \C$, and $\|f\|<|\lambda|$, then $\lambda e-f$ is invertible in $A$.
\end{proposition}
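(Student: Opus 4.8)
The plan is to reduce everything to the classical geometric (Neumann) series. Since $|\lambda| > \|f\| \geq 0$, the scalar $\lambda$ is nonzero, so I would begin by factoring $\lambda e - f = \lambda\bigl(e - \lambda^{-1}f\bigr)$. The element $\lambda e$ is invertible in $A$ with inverse $\lambda^{-1}e$, so it suffices to prove the special case: $e - g$ is invertible in $A$ whenever $g \in A$ satisfies $\|g\| < 1$. This will be applied with $g = \lambda^{-1}f$, for which $\|g\| = \|f\|/|\lambda| < 1$.

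To invert $e - g$ I would form the series $s = \sum_{n=0}^{\infty} g^n$, with the convention $g^0 = e$, and first check that it converges in $A$. Submultiplicativity of the norm gives $\|g^n\| \leq \|g\|^n$, and since $0 \leq \|g\| < 1$ the real geometric series $\sum_{n=0}^{\infty}\|g\|^n$ converges; hence the partial sums $s_N = \sum_{n=0}^{N} g^n$ form a Cauchy sequence, and completeness of the Banach algebra $A$ supplies a limit $s \in A$. This is the one step where the completeness hypothesis is genuinely used, and it is the main — though entirely routine — point of the argument.

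Finally I would verify that $s$ is a two-sided inverse of $e - g$ by telescoping: for each $N$, expanding and cancelling yields the identities $(e - g)s_N = s_N(e - g) = e - g^{N+1}$ (here $g$ commutes with its own powers, so no commutativity of $A$ is needed). Letting $N \to \infty$ and using continuity of left and right multiplication by the fixed element $e - g$, together with $\|g^{N+1}\| \leq \|g\|^{N+1} \to 0$, gives $(e-g)s = s(e-g) = e$. Consequently $\lambda e - f$ is invertible with inverse $\lambda^{-1}\sum_{n=0}^{\infty}(\lambda^{-1}f)^n = \sum_{n=0}^{\infty}\lambda^{-n-1}f^n$. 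There is no real obstacle in this proof; the only things to be careful about are to note at the outset that $\lambda \neq 0$ so that the factorization and rescaling are legitimate, and to record \emph{both} the left- and right-inverse identities for $s_N$ before passing to the limit.
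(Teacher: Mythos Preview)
Your proof is correct and is the standard Neumann-series argument. The paper does not actually prove this proposition; it simply records it as a ``very well known elementary fact'' and cites \cite[Theorem~1.2.1]{Browder}, whose proof is exactly the geometric-series computation you have written out.
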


We will also invoke the following easy lemma.

\blem
\label{arg}

Let $a$ be a positive real number, let $b$ and $c$ be complex numbers, and let $m\geq 2$ be a positive integer.
If $|\arg(c)| < {\pi}/{2m}$, and $|b| < (a/2) \sin(\pi/2m)$, then  
$\bigl|\arg[(a + b)c]\bigr|<\pi/m$.  {\rm(}Here $\arg(z)$ denotes the argument of $z$ chosen to lie in the interval $[-\pi, \pi]$.{\rm)}

\elem

\bpf
Note that
$$|\sin (\arg(a+b))|=\frac{|{\rm Im}\, b|}{|a+b|}\leq \frac{|b|}{|a|-|b|}\leq \frac{|b|}{a/2}
<\sin(\pi/2m).$$
Therefore, 
$$|\arg(a+b)|<\pi/2m.$$
Hence,
$$|\arg((a+b)c)|<\pi/2m+\pi/2m=\pi/m.$$
%\vskip -14pt
\epf

\begin{proof}[Proof of Lemma~\ref{generators}]
We will show that if $(c_n)$ is a sequence of positive numbers decreasing to zero sufficiently rapidly, and if $b_{N+1}=\sum_{n=N+1}^\infty c_nf_n$, then $A$ is generated by the functions $f_1,\ldots, f_N, b_{N+1}$.  For $n>N$, in the case $f_n$ is the inverse of a function in 
 $[ f_1, \ldots, f_{n-1}]$, we set $h_n = f^{-1}_n$.
Otherwise $f_n$ is an $m$-th root of an invertible function $g_n$ in $[ f_1, \ldots, f_{n-1}]$, and we then set $h_n = g^{-1}_n$.
%Obviously, $h_n$ is in $[ f_1, \ldots, f_{n-1}]$, and $g^2_n = f_n$.
Obviously, in either case, $h_n$ is in $[ f_1, \ldots, f_{n-1}]$.
%Then note that $f^{t_n}_n h_n = 1$ for $n \in \Z_+$.
Now choose a sequence $\{c_n: n\geq N+1\}$ of constants such that:
\bi
\item[(i)] $0 < c_n \le 1$ for $n \geq N+1$,
\smallskip
\item[(ii)] $c_n \|f_n\| < 2^{-n}$ for $n \geq N+1$,
\smallskip
\item[(iii)] 
$c_n \|f_n h_k\| < \bc
2^{-n} c_k & \mbox{ if } f_k= h_k^{-1}\\
2^{-n} c^m_k & \mbox{ if } f^m_k = g_k
\ec \indent\quad N+1 \le k < n$
\smallskip
\item[(iv)] if $f^m_k = g_k$, then we require also that
$$\left\| \sum_{n = k+1}^{\infty} c_n f_n \right\| <  (c_k/2)\bigl(\sin(\pi/2m)\bigr)\min_{x\in X} |f_k(x)|, \indent k \geq N+1.$$
(Note that $\min\limits_{x\in X} |f_k|>0$ since $f_k^{-1}$ is defined on $X$.)

\ei
Now define, for $k \geq N+1$,
$$b_k = \sum_{n = k}^{\infty} c_n f_n.$$
Condition (ii) assures that each $b_k$ is in $A$.
Also, using condition (iii), we see that for $k \geq N+1$,
$$ \|b_{k+1} h_k\| \le \sum_{n = k+1}^{\infty} c_n \|f_n h_k\| \le \bc
2^{-k} c_k & \mbox{ if } f_k= h_k^{-1}\\
2^{-k} c^m_k & \mbox{ if } f^m_k = g_k.
\ec$$
Let $B = [f_1, \ldots, f_N, b_{N+1}]$.
Clearly $B \sste A$.
We will prove that 
$A = B$, by using induction to show that $f_n$ is in $B$ for each $n = 1, 2, 3,  \ldots$.
By definition $f_1, \ldots, f_N$ are in $B$.
Now given $k\geq N+1$, we assume that $f_1, \ldots, f_{k-1}$ are in $B$, and show that then $f_k$ is in $B$.
First note that $b_k = b_{N+1} - (c_{N+1} f_{N+1} + \ldots + c_{k-1} f_{k-1})$ is in $B$.
Also $h_k$ is in $B$ since $h_k$ is in $[ f_1, \ldots, f_{k-1}]$.\\

\noindent
Case I: $f_k=h_k^{-1}$\\
Clearly $b_k h_k$ is in $B$, and
\[
\ba{lcl}
\|c_k - b_k h_k\|
&=& \|c_k - \sum_{n = k}^{\infty} c_n f_n h_k\|\\
&=& \|\sum_{n = k+1}^{\infty} c_n f_n h_k\|
= \|b_{k+1} h_k\| \le 2^{-k} c_k < c_k.
\ea
\]
Therefore, by Proposition~\ref{elem}, $b_k h_k = c_k - (c_k -b_k h_k)$ is invertible in $B$.
Hence $h_k$ is invertible in $B$, that is, $f_k = h^{-1}_k$ is in $B$.\\

\noindent
Case II: $f^m_k = g_k$\\
Clearly $b^m_k h_k$ is in $B$, and
%\bea
%\begin{aligned}
\[
\ba{lcl}
\displaystyle
b^m_k h_k
&=& (c_k f_k + b_{k+1})^m h_k\\
&=&\displaystyle c^m_k + b_{k+1} h_k \left[ \binom{m}{1} (c_k f_k)^{m-1} + \binom{m}{2} (c_k f_k)^{m-2} b_{k+1} + \ldots + b^{m-1}_{k+1} \right].
\ea
\]
%\end{aligned}
%\eea
Note that by conditions (ii) and (iv) above $\|b_{k+1}\|< 2^{-k}$, so
we can bound the second term in the last line of the preceding display as follows:
\beaa
\begin{aligned}
\Biggl\| b_{k+1} h_k &\left[ \binom{m}{1} (c_k f_k)^{m-1} + \binom{m}{2} (c_k f_k)^{m-2} b_{k+1} + \ldots + b^{m-1}_{k+1} \right] \Biggr\|\\
\le&\bigl \|b_{k+1} h_k\bigr\| \left[ \binom{m}{1} (2^{-k})^{m-1} + \binom{m}{2} (2^{-k})^{m-2} 2^{-k} + \ldots + (2^{-k})^{m-1} \right]\\
\le& 2^{-k} c^m_k (2^{-k})^{m-1} (2^m - 1)\\
=& (2^m - 1) 2^{-mk} c^m_k\\
<& c^m_k.
\end{aligned}
\eeaa
Therefore, applying Proposition~\ref{elem} again gives that $b^m_k h_k$ is invertible in $B$.  Furthermore, the inequality gives that the range of the function
$b^m_k h_k$ lies entirely in the open right half-plane.
Let $\sqrt[m]{z}$ denote the branch of the logarithm defined on the open right half-plane taking values in the sector where the argument lies in the interval $(-\pi/2m, \pi/2m)$, and set $s_k=\sqrt[m]{z}\circ (b^m_k h_k)$.  
Since $\sqrt[m]{z}$ can be approximated uniformly on compact sets by polynomials, $s_k$ lies in $B$, and since $b^m_k h_k$ is invertible in $B$, so is $s_k$.

Note that 
$$f^m_k = g_k = h^{-1}_k = (b_k s^{-1}_k)^m.$$
Taking $m$-th roots we get, that for each $x \in X$, there is an $m$-th root of unity $\alpha(x)$ such that
$$\alpha(x) f_k(x) =  b_k(x) s^{-1}_k(x) = \bigl(c_k f_k(x) + b_{k+1}(x)\bigr)s^{-1}_k(x).\leqno(1)$$
Then
$$\alpha(x) =  
\left( c_k + \frac{b_{k+1}(x)}{f_k(x)} \right) s^{-1}_k(x), \indent x \in X.\leqno(2)$$

By construction, the argument of $s_k(x)$ lies in the interval $(-\pi/2m, \pi/2m)$, and hence the same is true of the argument of $s^{-1}_k(x)$.
Also, from condition (iv), we see that $\left| \frac{b_{k+1}(x)}{f_k(x)} \right| < (c_k/2)\sin(\pi/2m)$, for $x \in X$.
Thus by Lemma~\ref{arg} and equation (2), the $m$-th root of unity $\alpha(x)$ satisfies $\bigl|\arg\alpha(x)\bigr|<\pi/m$ and hence must be equal to 1.  Consequently by equation (1), we have $f_k = b_k s^{-1}_k$.
Therefore, $f_k$ is in $B$.\\

Thus, by induction, $f_n$ is in $B$ for each positive integer $n$, and hence, $A = B = [f_1, \ldots, f_N, b_{N+1}]$.
\end{proof}
\eject

%%%%%%%%%%%%%%%%%%%%%%%%%%%%%%%%%%%%%%%%%       %                           PROOF OF MAIN THEOREM

\section{A Hull with No Nontrivial Gleason Parts\\ and No Nonzero Bounded Point Derivations}
%%%%%%%%%%%%%%%%%%%%%%%%%%%%%%%%%%%%%%%%%%%%%%%%%%%%%%%%%%%%%%%%%%%%%%%%%%%%%%%%%%

This section is devoted to the proof of Theorem~\ref{maintheorem} 
and the related part of Theorem~\ref{dense}.

Let $Y$ be the set given in Theorem~\ref{Dales-Feinstein Algebra}.  Set 
$X_0=\wY$ and $A_0=P(X_0)$.  Following the iterative procedure used by the first author \cite[Theorem~2.5]{Cole} (and also described in \cite[p.~ 201]{Stout}) to obtain uniform algebras with only trivial Gleason parts on metrizable spaces, we will define a sequence of uniform algebras $\{A_m\}_{m=0}^\infty$.  First let $\sF_0=\{f_{0,n}\}_{n=1}^\infty$ be a countable dense set of invertible functions in $A_0$.  Let $\pi:X_0\times \C^\omega\rightarrow X_0$ and $\pi_n:X_0\times \C^\omega \rightarrow \C$ denote the projections given by $\pi(x,(y_k)_{k=1}^\infty)=x$ and $\pi_n(x,(y_k)_{k=1}^\infty)=y_n$.  Define $X_1\subset X_0\times \C^\omega$ by
$$X_1=\{z\in X_0\times \C^\omega: \pi_n^2(z)= (f_{0,n}\circ\pi)(z) \hbox{\ for all } n\in \Z_+\},$$
and let $A_1$ be the uniform algebra on $X_1$ generated by 
the functions $\{\pi_n\}_{n=1}^\infty$.
% \cup \{f_{0,n}\circ \pi\}_{n\in \Z_+}$.
Note that since $\pi_n^2=f_{0,n}\circ\pi$ on $X_1$, the functions $f_{0,n}\circ\pi$ belong to $A_1$.  Thus by identifying each function $f\in A_0$ with $f\circ\pi$, we can regard $A_0$ as a subalgebra of $A_1$.  Then $\pi_n^2=f_{0,n}$, so we will denote $\pi_n$ by $\sqrt{f_{0,n}}$.  By 
\cite[Theorem~2.1]{T.Dawson_J.Feinstein_2003} the algebra $A_1$ has dense invertibles.  The collection $\sC$ of polynomials in the members of 
$\{\sqrt{f_{0,n}}\}_{n=1}^\infty$ with coefficients whose real and imaginary parts are rational is a countable dense subset of $A_1$.  Moreover, because the group of invertibles in $A_1$ is a dense, {\it open} subset of $A_1$, the intersection 
of $\sC$ with the group of invertibles is also dense in $A_1$.  Let $\sF_1=\{f_{1,1}, f_{1,2}, \ldots\}$ be this intersection.  Note that each $\sqrt{f_{0,n}}$ lies in $\sF_1$.  We now iterate this construction to obtain a sequence of uniform algebras $\{A_m\}_{m=0}^\infty$ on compact metric spaces $\{X_m\}_{m=0}^\infty$ and for each $m$ a countable dense set $\sF_m=\{f_{m,1}, f_{m,2}, \ldots\}$ of invertible functions in $A_m$.  Each $A_m$ can be regarded as a subalgebra of $A_{m+1}$.  
Each function in $\sF_{m+1}$ is a polynomial in the members of 
$\{\sqrt{f_{m,n}}\}_{n=1}^\infty$.  In addition, each $\sqrt{f_{m,n}}$ lies in $\sF_{m+1}$.

We now take the direct limit of the system of uniform algebras $\{A_m\}$ to obtain a uniform algebra $A_\omega$ on some compact metric space $X_\omega$.  If we regard each $A_n$ as a subset of $A_\omega$ in the natural way, and set $\sF=\bigcup \sF_m$, then $\sF$ is a dense set of invertibles in $A_\omega$, and every member of $\sF$ has a square root in $\sF$.  It follows (by \cite[Lemma~1.1]{Cole}) that every Gleason part for $A_\omega$ consists of a single point and that there are no nonzero bounded point derivations on $A_\omega$.

By \cite[Theorem~2.5]{Cole}, there is a surjective map $\tpi:X_\omega\rightarrow X_0$ that sends the Shilov boundary for $A_\omega$ into the Shilov boundary for $A_0$.  Since the Shilov boundary for $A_0$ is contained in $Y\subsetneq \wY$, this gives that the Shilov boundary $\Gamma_{A_\omega}$ for $A_\omega$ is a proper subset of $X_\omega$.  Now to complete the proof of Theorem~\ref{maintheorem} and the related part of Theorem~\ref{dense} it suffices to show that $A_\omega$ is generated by three functions, for if $f_1, f_2, f_3$ are generators for $A_\omega$, and we set $X=\{ (f_1(x),f_2(x), f_3(x)): x\in \Gamma_{A_\omega}\}$, then $P(X)$ is isomorphic as a uniform algebra to $A_\omega$ and
$\wX=\{ (f_1(x),f_2(x), f_3(x)): x\in X_\omega\}$
is strictly larger than $X$.

To show that $A_\omega$ is generated by three functions, we consider the following array of functions:
\[
\ba{cccc}
z_1 \circ \ti \pi\\[7pt]
z_2 \circ \ti \pi\\[7pt]
\sqrt{f_{0,1}} & \sqrt{f_{0,2}} & \sqrt{f_{0,3}} & \ldots\\[7pt]
f^{-1}_{1,1} & f^{-1}_{1,2} & f^{-1}_{1,3} & \ldots\\[7pt]
\sqrt{f_{1,1}} & \sqrt{f_{1,2}} & \sqrt{f_{1,3}} & \ldots\\[7pt]
f^{-1}_{2,1} & f^{-1}_{2,2} & f^{-1}_{2,3} & \ldots\\[7pt]
\sqrt{f_{2,1}} & \sqrt{f_{2,2}} & \sqrt{f_{2,3}} & \ldots\\[7pt]
\vdots & \vdots & \vdots & \ddots\\
\ea
\]
The functions in the above array generate $A_\omega$.  Thus by Lemma~\ref{generators}, the proof will be complete if we show that the functions in the array can be arranged in a sequence that begins with
$z_1 \circ \ti \pi$ and
$z_2 \circ \ti \pi$ and is such that each function that comes after that in the sequence is either the inverse of a function in the uniform algebra generated by the preceding functions in the sequence or else is the 
square root of an invertible function in the uniform algebra generated by the preceding functions in the sequence.  Roughly we would like to list the functions by proceeding along successively lower upward slanting diagonals listing
$$\sqrt{f_{0,1}}; f_{1,1}^{-1}, \sqrt{f_{0,2}}; \sqrt{f_{1,1}}, f_{1,2}^{-1}, \sqrt{f_{0,3}};\ldots,$$ but this procedure must be modified to insure that whenever the inverse of a function $f_{m,n}$ is listed, the function $f_{m,n}$ belongs to the uniform algebra generated by the previous functions.  We therefore proceed as follows.  Having listed  $z_1\circ \tpi$ and $z_2\circ \tpi$, we list $\sqrt{f_{0,1}}$ and continue listing functions from the same row ($\sqrt{f_{0,2}}$, $\sqrt{f_{0,3}}$,\,\ldots) until the function $f_{1,1}$ belongs to the uniform algebra generated by the listed functions.  We then list 
$f_{1,1}^{-1}$.  If 
$\sqrt{f_{0,2}}$ has not yet been listed, we then add $\sqrt{f_{0,2}}$ to the list.  We then list $\sqrt{f_{1,1}}$.  Next proceeding along the upward slanting diagonal 
through $\sqrt{f_{1,1}}$, we consider $f_{1,2}^{-1}$.  Before listing $f_{1,2}^{-1}$,
we list more functions of the form $\sqrt{f_{0,n}}$, if needed, until $f_{1,2}$ belongs to the generated uniform algebra.  Then we list $f_{1,2}^{-1}$.  Continuing along this same upward slanting diagonal, we list $\sqrt{f_{0,3}}$ if it has not yet been listed.   We then proceed to the next upward slanting diagonal and consider $f_{2,1}^{-1}$.  Before listing $f_{2,1}^{-1}$, we must insure that $f_{2,1}$ is in the generated uniform algebra.  This may involve listing more functions of the form $\sqrt{f_{m,n}}$ from the row above.  Before listing these, we may have to list more functions from the rows above to insure that $f_{m,n}$ is in the generated uniform algebra.  In general, each time we are to list a function of the form $f_{m,n}^{-1}$ or $\sqrt{f_{m,n}}$, we list functions from earlier rows as needed until the function $f_{m,n}$ belongs to the generated uniform algebra.  In this way, we obtain the desired sequence. \hfill$\square$

%%%%%%%%%%%%%%%%%%%%%%%%%%%%%%%%%%%%%%%%%
%               DOUBLY GENERATED EXAMPLE

\section{A Doubly Generated Counterexample to the\\ Peak Point Conjecture}
%%%%%%%%%%%%%%%%%%%%%%%%%%%%%%%%%%%%%%%%%
%%%%%%%%%%%%%%%%%%%%%%%%%%%%%%%%%%%%%%%%%

This section is devoted to the proof of Theorem~\ref{double} 
and the related part of Theorem~\ref{dense}.  The proof can be carried out by an argument similar to the one used above in Section~3 by replacing the starting algebra $P(X_0)$ used there by the algebra $R(X_0)$ for $X_0$ a compact set in the plane such that $R(X_0)\neq C(X_0)$ and the only Jensen measures for $R(X_0)$ are point masses.  However, we will instead give a modification of this argument based on the presentation of Cole's original counterexample to the peak point conjecture given in the text \cite{Browder} by Andrew Browder.  The argument in \cite{Browder}, the essential features of which are the same as in Cole's original construction, yields the desired algebra in one step eliminating the need for induction and the use of an inverse limit.

We now begin the proof.  We will show that there exists a uniform algebra $\tA$ on a compact space $\tX$ such that
\bi
\item[(i)] $\tA\neq C(\tX)$,
\item[(ii)] the maximal ideal space of $\tA$ is $\tX$,
\item[(iii)] every point of $\tX$ is a peak point for $\tA$,
\item[(iv)] $\tA$ has a dense set of invertibles,  and
\item[(v)] $\tA$ is generated by two functions.
\ei
Theorem~\ref{double} and the related part of Theorem~\ref{dense} then follow at once by choosing generators $f_1$ and $f_2$ for $\tilde A$ and setting $X=\{ (f_1(x),f_2(x)): x\in \tX\}$.
%\begin{proof}[Proof of Theorem~\ref{double} and the related part of Theorem~\ref{dense}]

Let $X_0$ be a compact set in the plane such that $R(X_0)\neq C(X_0)$ and the only Jensen measures for $R(X_0)$ are point masses.  (The first example of a set $X_0$ with the prescribed property was given by McKissick \cite{Mc}.  McKissick's example is presented in  \cite[pp.~344--355]{Stout}, and a substantial simplification of part of the argument is given in \cite{Ko}.  Probably the simplest example is the one given in \cite[pp.~193--195]{Browder}.)  Set $A=R(X_0)$.  Since $X_0$ has empty interior, 
it is easy to show that the rational functions with poles off $X_0$ and no zeros on $X_0$ are dense in $A$.
Hence, there is a countable dense subset $\sF = \{f_n: n \in \Z_+\}$ of $A$ consisting of invertible elements where each $f_n$ is a rational function $f_n=p_n/q_n$ with $p_n$ and $q_n$ polynomials that are zero free on $X_0$.

Let $J = \{(m, n): m, n \in \Z_+, m\geq 2\}$, and let $Z = X_0 \times \C^J$. 
A point $z$ of $Z$ can be written in the form $z = (x, y)$ where $x \in X_0$ and $y = (y_{m, n})_{(m, n) \in J}$ with $y_{m, n} \in \C$ for each $(m, n) \in J$.
Let $\pi \mpt: Z \ra X_0$ be the projection given by $\pi(x, y) = x$, and for each $(m, n) \in J$, let $\pi_{m, n} \mpt: Z \ra \C$ be the projection given by $\pi_{m, n}(x, y) = y_{m, n}$.
Define $\tilde X$ by
$$\tilde X = \{z \in Z: f_n(\pi(z)) = \pi^m_{m, n}(z), (m, n) \in J\}.$$
Note that $\tilde X$ is closed in $Z$ and that 
each set $\pi_{m, n}(\tilde X)$ is bounded in $\C$ since each $f_n$ is bounded.
Therefore, the Tychonoff theorem implies that $\ti X$ is compact.
Since $\ti X$ is a subspace of a countable product of metrizable spaces, $\ti X$ is metrizable.

Let $\ti \pi \mpt: \ti X \ra X_0$ and $g_{m, n} \mpt: \ti X \ra \C$ be the restrictions to $\ti X$ of $\pi$ and $\pi_{m, n}$, respectively.
Let 
$$\sG = \{f \circ \ti \pi: f \in A\} \cup \{g_{m, n}: (m, n) \in J\}.$$
Note that $\sG$ contains the constant functions and separates points on $\ti X$.
Let $\ti A$ be the uniform algebra on $\ti X$ generated by $\sG$.

To show that $\ti A$ is generated by two functions, we consider the following array of functions:
\[
\ba{cccc}
z \circ \ti \pi\\[4.5pt]
(1/{q_1}) \circ \ti \pi\ \ & (1/{q_2}) \circ \ti \pi \ \ & (1/{q_3}) \circ \ti \pi \ \ & \ldots\\[4.5pt]
(1/{p_1}) \circ \ti \pi \ \ & (1/{p_2}) \circ \ti \pi \ \ & (1/{p_3}) \circ \ti \pi \ \ & \ldots\\[4.5pt]
g_{2,1} & g_{2,2} & g_{2,3} & \ldots\\[4.5pt]
g_{3,1} & g_{3,2} & g_{3,3} & \ldots\\[4.5pt]
g_{4,1} & g_{4,2} & g_{4,3} & \ldots\\[4.5pt]
\vdots & \vdots & \vdots & \ddots\\
\ea
\]
The functions in the above array generate the uniform algebra $\tA$.  Now arrange the above functions in a sequence
$$z \circ \ti \pi, (1/{q_1}) \circ \ti \pi, (1/{p_1}) \circ \ti \pi, (1/{q_2}) \circ \ti \pi, g_{2,1}, 
(1/{p_2}) \circ \ti \pi, (1/{q_3}) \circ \ti \pi, \ldots$$
where after listing $z \circ \ti \pi$ and  $(1/{q_1}) \circ \ti \pi$, we proceed along successively lower upward slanting diagonals.
Then each function in the sequence beyond the first term is either the inverse of a function in the uniform algebra generated by the preceding functions in the sequence or else is the 
$m$-{th} root ($m \ge 2$) of an invertible function in the uniform algebra generated by the preceding functions in the sequence.
Therefore, by Lemma \ref{generators}, $A$ is generated by two functions.

By \cite[Theorem~2.1]{T.Dawson_J.Feinstein_2003}, for each finite subset $J_0$ of $J$, the algebra generated by 
$\sG = \{f \circ \ti \pi: f \in A\} \cup \{g_{m, n}: (m, n) \in J_0\}$
has dense invertibles, and hence so does $\tA$.

For the proof that $\tA$ has properties (i), (ii), and (iii) we follow closely the exposition of the first author's original counterexample to the peak point conjecture given in Browder's text \cite[Appendix]{Browder}.
To verify (ii), let $\ti \phi$ be an arbitrary multiplicative linear functional on $\ti A$.
Then the map $\phi \mpt: A \ra \C$ defined by $\phi (f) = \ti \phi (f \circ \ti \pi)$ is a multiplicative linear functional on $A$, so there exists a point $x \in X_0$ such that $\phi (f) = f(x)$ for all $f \in A$.
Let $z= \bigl(x, \bigl(\tilde\phi(g_{m,n})\bigr)_{(m,n)\in J}\bigr) \in X_0\times \C^J=Z$.  Then $[\pi_{m, n} (z)]^m = [\ti \phi (g_{m, n})]^m = \ti \phi (g^m_{m, n}) = \ti \phi (f_n \circ \ti \pi) = \phi(f_n) = f_n (\pi (z))$, and hence $z$ is a point of $\ti X$.
Furthermore $\ti \phi (h) = h(z)$ for every $h \in \sG$, and hence for every $h \in \ti A$.
This shows that the maximal ideal space of $\ti A$ is $\ti X$.\\

We next verify (iii).
Let $\ti x =(x,y)\in \ti X\subset X_0\times \C^J$, and $\mu$ be a representing measure for $\ti x$.
Let $\ti \pi_*(\mu)$ be the push forward measure of $\mu$ under $\ti \pi$.
Since $\mu$ is a positive measure, supp$(\mu) \sste \ti \pi^{-1}($supp$(\ti \pi_*(\mu))$.
For any $(m, n) \in J$, we have
\[
\ba{lcl}
|f_m(x)|^{1/m}
&=& |g_{m, n}(\ti x)| = \left |\int g_{m, n }\, d\mu \right |\\
&\le& \int |g_{m, n}| \, d\mu = \int |f_n\circ \ti \pi |^{1/m} \, d\mu = \int |f_n|^{1/m} \, d\ti \pi_*(\mu).
\ea
\]
Thus $|f_n(x)| \le \left [\int |f_n|^{1/m} \, d \ti \pi_*(\mu) \right ]^m$ for all $(m, n) \in J$.
Since $\sF=\{f_n\}$ is dense in $A$, it follows that $|f(x)| \le \left [\int |f|^{1/m} \, d \ti \pi_*(\mu) \right ]^m$ for all $f \in A$ and for all $m \in \Z_+$.
It follows that $|f(x)| \le \exp \int \log|f| \, d \ti \pi_*(\mu)$ for all $f \in A$ (see \cite[pp.~125-126]{Browder}).
Thus $\ti \pi_*(\mu)$ is a Jensen measure for $x$ with respect to $A$, and hence, $\ti \pi_*(\mu)$ is the unit point mass at $x$.
It follows that supp$(\mu) \sste \ti \pi^{-1}(\{x\})$.
Next for each $(m, n) \in J$,
$$\left |\int g_{m, n }\, d\mu \right | = |g_{m, n}(\ti x)|
= |f_n(x)|^{1/m} = \int |g_{m, n}| \, d\mu,$$
since $|f_n(x)|^{1/m} = |g_{m, n}|$ on $\ti \pi^{-1}(\{x\})$.
Hence $g_{m, n}$ is constant on supp$(\mu)$ for every $(m, n) \in J$.
Since $\{g_{m, n}: (m, n) \in J\}$ separates the points of $\ti \pi^{-1}(\{x\})$, it follows that supp$(\mu) = \{\ti x\}$.
We conclude that the unit point mass at $\tilde x$ is the only representing measure for $\ti x$ with respect to $\ti A$.
Since $\ti X$ is metrizable, it follows that $\ti x$ is a peak point for $\ti A$.

Finally we prove (i).  For this it suffices to show that if $f\in C(X_0)$ and $f\circ \pi \in \tA$, then $f\in A$.
We will show more: there exists a continuous linear map $P \mpt: C(\ti X) \ra C(X_0)$ that is onto and such that $P(f \circ \ti \pi) = f$ for each $f \in C(X_0)$, and $P(\ti A) = A$.
We define $P$ by ``averaging over the fibers of $\pi$".

For each $(m, n) \in J$, let $H_{m, n}$ be the group of $m$-th roots of unity, 
and let $H = \prod_{(m, n) \in J} H_{m, n}$.
Then $H$ acts on $\tX$ by the action $\gamma \mpt: H \times \tX \ra \tX$ given by $\gamma (h, (x, (y_{m,n}))) = (x, (h_{m,n}\, y_{m,n}))$.
Note that $\gamma$ is a well-defined continuous mapping, and hence is a group action, and write $h\cdot(x, y)$ for $\gamma (h, (x, y))$.
Note also that $H$ is a compact group.
Let $\mu$ be the normalized Haar measure on $H$.
For $\tf \in C(\ti X)$, we define $P\tf \mpt: X_0 \ra \C$ by
$$P\tf(x) = \int_H \tf(h\cdot(x, y))\, d\mu (h).$$
We will show that $P\tf$ is a continuous function of $x$, and is well-defined independent of the choice of $y$, and hence we have a well-defined map $P \mpt: C(\ti X) \ra C(X_0)$.
First note that the integrand is a continuous function of $h$ on $H$, so the integral exists.
Note that $H$ acts transitively on $\ti \pi^{-1} (x)$.
Therefore, if $y'$ is a point in $\C^J$ such that $(x, y') \in \ti X$, then $(x, y') = h'\cdot(x, y)$ for some $h' \in H$.
Then
\beastar
\int_H \tf(h\cdot (x, y'))\, d\mu (h)
&=& \int_H \tf(h\cdot (h'\cdot(x, y)))\, d\mu (h)\\
&=& \int_H \tf(hh'\cdot(x, y))\, d\mu (h)\\
&=& \int_H \tf(h\cdot (x, y))\, d\mu (h),
\eeastar
by the invariance of Haar measure.
Hence $P\tf(x)$ is well-defined independent of the choice of $y$.
To establish the continuity of $P\tf$, let $(x,y)$ be a point of $\tX$ and $(x_n,y_n)$ be a sequence such that 
$(x_n, y_n) \rightarrow (x, y)$ in $\tX$.
For each fixed $h \in H$, note that $\tf(h\cdot (x_n, y_n)) \rightarrow \tf(h\cdot (x, y))$.
Thus the dominated convergence theorem implies that $P\tf(x_n) \rightarrow P\tf(x)$.  Consequently, $P\tf\circ \tilde \pi$ is continuous, and hence so is $P\tf$.

Clearly the map $P$ is linear.
Since $\|\mu\| = 1$, we have $\|P\tf\|_\infty \le \|\tf\|_\infty$, so $P$ is continuous.
Also it is clear that $P(f \circ \ti \pi) = f$ for $f \in C(X_0)$, and hence $P$ is onto.
To show $P(\ti A) \sste A$, it suffices, by the continuity and linearity of $P$, to prove that $P$ sends each monomial in the functions in $\sG$ into $A$.
Furthermore because $g_{m, n}^m=f\circ \tilde\pi$ for each $(m, n) \in J$, it suffices to consider functions of the form $\tf = (f \circ \ti \pi) g_{m_1, n_1}^{r_1}, \ldots, g_{m_k, n_k}^{r_k}$, with $f \in A$ and $r_j<m_j$ for each $j$.
If none of the $g_{m_j, n_j}$ are present, then we have already noted that $P(\tf) = P(f \circ \ti \pi) = f \in A$.
If some $g_{m_j, n_j}$ are present, define $a \in H$ by $a_{m, n} = 1$ for all $(m, n) \ne (m_1, n_1)$ and $a_{m_1, n_1} = e^{2\pi i/m_1}$.
Then setting $\tf_a (x, y) = \tf(a\cdot(x, y))$, we have $\tf_a = a_{m_1, n_1}\tf$, and so $P(\tf_a) = a_{m_1, n_1}P(\tf)$.
But $P(\tf_a) = P(\tf)$ by the invariance of Haar measure.
We conclude that $P(\tf) =0$, so $P(\tf)$ is in~$A$.
\hfill$\square$

\end{document}